\renewcommand{\sectionautorefname}{\S\@gobble}
\renewcommand{\subsectionautorefname}{\S\@gobble}
\renewcommand{\subsubsectionautorefname}{\S\@gobble}
\theoremstyle{plain}
\newtheorem{theorem}{Theorem}[section]
\newtheorem{lemma}[theorem]{Lemma}
\theoremstyle{definition}
\newtheorem{example}[theorem]{Example}
\theoremstyle{remark}
\newcommand{\paren}[1]{\left( #1 \right)}
\title[]{Obtaining the Chamanara Surface from \\ the van der Corput sequence}
\author[]{Zawad Chowdhury, Francois Clement, Max Horwitz}
\begin{document}
\begin{abstract}
    We investigate a family of $4$-regular graphs constructed to test for the presence of combinatorial structure in a sequence of distinct real numbers. 
    We show that the graphs constructed from the Kronecker sequence can be embedded into the torus, while the graphs constructed from the binary van der Corput sequence can be embedded into the Chamanara surface, in both cases with the possible removal of one edge. 
    These results allude to a general theory of sequence graphs which can be embedded into particular translation surfaces coming from interval exchange transformations. 
\end{abstract}
\maketitle

\section{Discussion}

\subsection{Introduction} Let $a_0, a_1, \dots$ be a sequence of distinct real numbers. From the first $N$ terms $a_0, \dots, a_{N-1}$ of this sequence, we construct a 4-regular graph on the vertices $\{0, 1, \dots, N-1\}$ as follows. First we connect the edges $(0, 1)$, then $(1, 2)$, and so on until $(N-1, 0)$. Then we find the permutation $\pi: \{0, 1, \dots, N-1\} \to \{0, 1, \dots, N-1\}$ which orders the terms $a_0, \dots, a_{N-1}$ so that 
$$a_{\pi(0)} < a_{\pi(1)} < \dots < a_{\pi(N-1)},$$
and we connect the edges $(\pi(0), \pi(1))$, $(\pi(1), \pi(2))$ and so on until $(\pi(N-1), \pi(0))$. 
This yields a $4$-regular graph whose edge set $E$ is composed of two Hamiltonian cycles. The first visits vertices in the order defined by the sequence, and the second in the order of their size. We define this graph to be the \textbf{$N$-th sequence graph} associated with $\{a_i\}.$ 
\begin{figure}[!h]
    \centering
    \begin{minipage}{0.48\textwidth}
        \centering
        \includegraphics[width=0.8\textwidth]{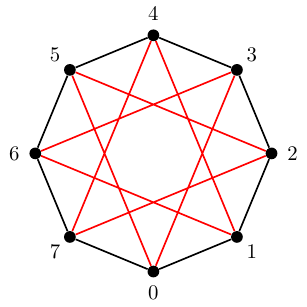}
    \end{minipage}
        \begin{minipage}{0.48\textwidth}
        \centering
        \includegraphics[width=0.8\textwidth]{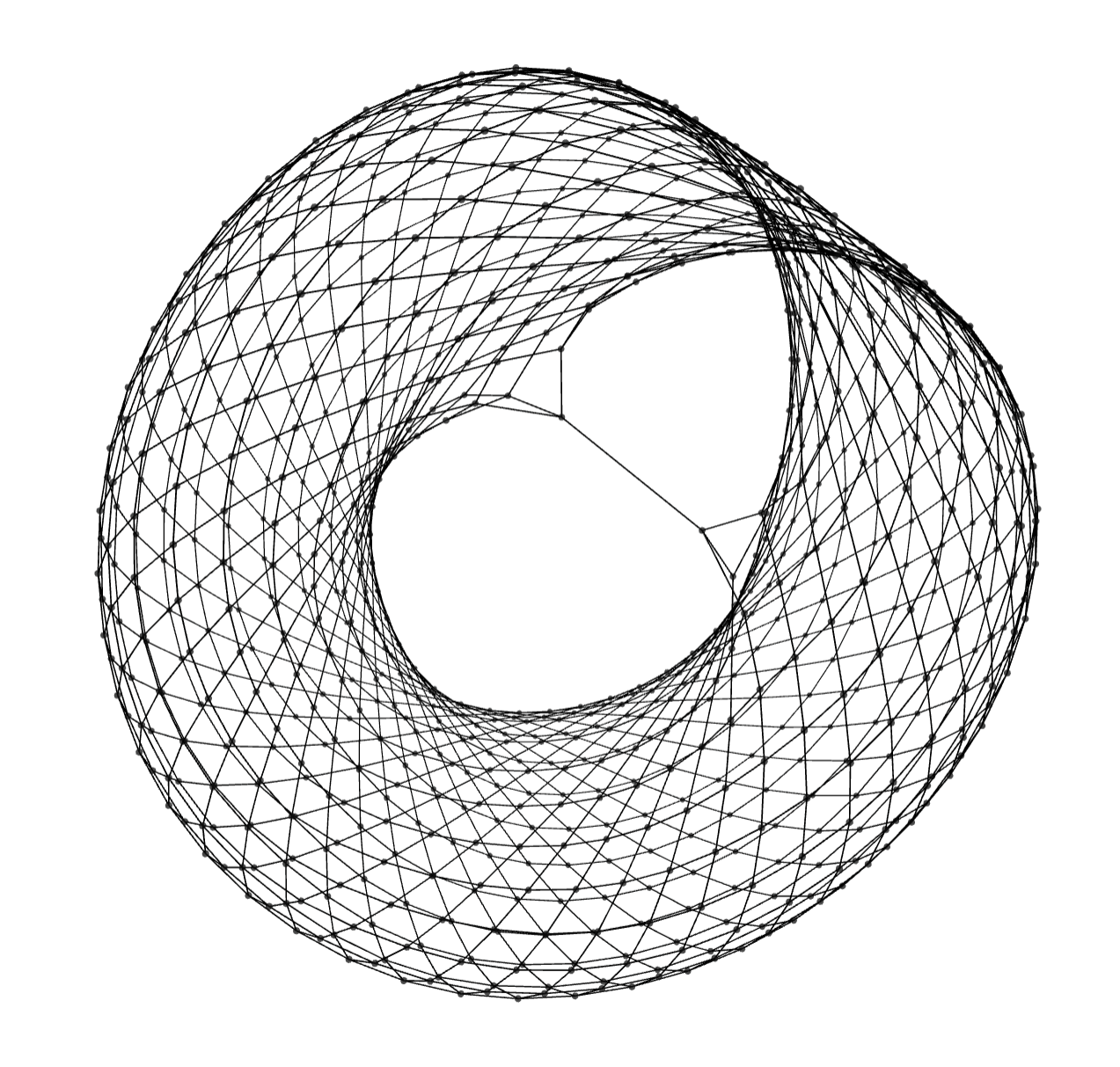}
    \end{minipage}
    \caption{Left: Kronecker sequence graph with $N=8$; colors show the two Hamiltonian cycles. Right: Kronecker sequence graph with $N = 971$ resembles a torus besides one rogue edge.}
    \label{fig:kron-seq-graphs}
\end{figure}
\begin{example}
    Consider the Kronecker sequence with parameter $\theta = (1+\sqrt{5})/2 = 1.618\dots$, whose terms are given by $a_n = n\theta \mod 1$. Starting with $n=0$, the first 8 terms of this sequence are roughly $0, 0.618, 0.236, 0.854, 0.472, 0.090, 0.708, 0.326$.
    They are sorted by the permutation $(0, 5, 2, 7, 4, 1, 6, 3)$, where position $i$ indicates the value of $\pi(i)$. The corresponding sequence graph is illustrated in \autoref{fig:kron-seq-graphs}.
\end{example}
Sequence graphs were first defined by Steinerberger \cite{steinerberger2020expandertest} as a test to determine whether a list of numbers resembles independent samples of a random variable. Korssjoen, Li, Steinerberger, Tripathi and Zhang \cite{korssjoen2022structures} investigated them further, cataloged certain structures found in many deterministic sequence graphs. In this paper, we focus on the graphs for two well-known uniformly distributed sequences, Kronecker and van der Corput, and fully explain the structures present.

We say a graph $G$ embeds into a surface $S$ if we can draw the vertices and edges of $G$ in $S$ without any crossings. In \autoref{sec:kron-intro} we describe how the Kronecker sequence graphs embed into the torus. We discuss the van der Corput case in \autoref{sec:vdc-intro}, defining the Chamanara surface \cite{chamanara2004infinite} and describing how the sequence graphs embed into it. Finally, we explore a possible explanation for the surface structure in both sequences in \autoref{sec:iet-intro}. Proofs of our theorems are deferred till \autoref{sec:proofs}.

\subsection{Kronecker} \label{sec:kron-intro}
The \textbf{Kronecker} sequence $a_{0}, a_{1}, \ldots$ is the sequence given by $$a_{n} = n \theta \mod1 $$
where $\theta \in \mathbb{R} \setminus \mathbb{Q}$. The fact that $\theta$ is irrational ensures that the elements of the sequence are pairwise distinct. Kronecker sequence graphs are illustrated in \autoref{fig:kron-seq-graphs}.

Since $a_0 = 0$ is the lowest term, we will always have $\pi(0) = 0$ regardless of $N$. Besides that, the minimum and maximum value of $a_i$ in our first $N$ terms, $\pi(1)$ and $\pi(N-1)$, are important for our result. Whenever $N = \pi(1) + \pi(N-1)$, then the gaps $\pi(i+1) - \pi(i)$ in our permutation take on one of two values. Moreover these two values are congruent mod $N$, which is enough to show that the sequence graphs embed into the torus, our first result.

\begin{theorem}[label=thm:kron-nice, restate=thmkronnice]
For $N = \pi(1) + \pi(N-1)$, the $N$-th Kronecker sequence graph can be embedded into a torus. 
\end{theorem}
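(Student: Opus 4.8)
The plan is to reduce the statement to an explicit toroidal embedding of a circulant graph, using the three-distance (three-gap) theorem to pin down the order cycle. Write $p = \pi(1)$ and $q = \pi(N-1)$, so the hypothesis reads $N = p + q$. Since $a_0 = 0$, the index $p$ is the index of the smallest positive value $a_k = \set{k\theta}$ among $1 \le k < N$ (a best one-sided approximation denominator), and $q$ is the index realizing the largest value, i.e. the point nearest $1$ from below. First I would invoke the two-gap form of the three-distance theorem: when $N = p + q$, the clockwise-successor map on the point set $\set{a_k : 0 \le k < N}$ sends $a_v$ to $a_{v+p}$ when $v < q$ and to $a_{v-q}$ when $v \ge q$. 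In both cases the successor index is $v + p \pmod N$, because $-q \equiv p \pmod N$ by $N = p + q$. Hence the order cycle $(\pi(0), \pi(1), \dots)$ is exactly the cycle $v \mapsto v + p \pmod N$, while the sequence cycle is $v \mapsto v+1 \pmod N$.

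Consequently the sequence graph is isomorphic to the circulant graph on $\mathbb{Z}/N\mathbb{Z}$ with connection set $\set{\pm 1, \pm p}$, its two Hamiltonian cycles being the $+1$-cycle and the $+p$-cycle. To embed this on the torus I would realize it as a quotient of the integer grid. Consider the lattice homomorphism $\mathbb{Z}^2 \to \mathbb{Z}/N\mathbb{Z}$, $(i,j) \mapsto i + pj \pmod N$, whose kernel $\Lambda$ is a rank-two sublattice of index $N$ (generated, for instance, by $(N,0)$ and $(-p,1)$). The standard square-grid drawing in $\mathbb{R}^2$ — vertices at $\mathbb{Z}^2$, with unit horizontal and vertical edges — is invariant under translation by $\Lambda$, so it descends to a drawing on the torus $\mathbb{R}^2/\Lambda$. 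Under the induced identification of vertices with $\mathbb{Z}/N\mathbb{Z}$, horizontal edges become the $+1$-edges and vertical edges become the $+p$-edges, recovering exactly our graph.

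It remains to check that this descended drawing is a genuine crossing-free embedding, which I expect to be the main obstacle. Two points need care. Combinatorially, one must verify the map is injective on vertices and edges — automatic on vertices since $\Lambda$ is the full kernel, and on edges once the graph is simple, i.e. $p \not\equiv \pm 1 \pmod N$ with $N > 2$, which holds in the regime of interest. Topologically, one must confirm that $\Lambda$ acts freely and properly on the plane by grid-preserving translations (clear) and that each face of the quotient is an open disk. The latter is cleanest via Euler's formula: the embedding has $V = N$ and $E = 2N$, and the unit squares of the grid descend to $N$ quadrilateral faces, whence $V - E + F = N - 2N + N = 0$, the Euler characteristic of the torus. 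As a final cross-check I would confirm the face count directly from the rotation system cyclically ordering the four edges at each vertex as $(+1, +p, -1, -p)$, verifying that tracing faces yields exactly the $N$ squares $\set{v, v+1, v+1+p, v+p}$. The genuinely delicate step is the first one — justifying the two-gap description of the successor map and the identification of $p, q$ with the extremal indices $\pi(1), \pi(N-1)$ — since everything downstream is a formal consequence.
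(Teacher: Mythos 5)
Your reduction is the same as the paper's: you invoke the three-gap theorem, observe that the hypothesis $N = \pi(1) + \pi(N-1)$ empties the third gap class, use $-\pi(N-1) \equiv \pi(1) \pmod N$ to conclude the order cycle is $v \mapsto v + \pi(1)$, and identify the sequence graph with the circulant $C_N(\set{1, \pi(1)})$ --- precisely the argument in \autoref{sec:kron-proof}, down to the same appeal to Ravenstein's formulation. Where you differ is the final step: the paper cites \cite[Proposition 4]{COSTA2010369} for the fact that connected circulant graphs $C_N(\set{c_1, c_2})$ embed into the torus, whereas you prove it from scratch by realizing the graph as the quotient of the integer grid by the kernel $\Lambda$ of $(i,j) \mapsto i + \pi(1) j \pmod N$, a rank-two sublattice of index $N$, and verifying the embedding on $\mathbb{R}^2/\Lambda$ via the face count $V - E + F = N - 2N + N = 0$. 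This inlined argument is correct, makes the proof self-contained, and in fact recovers for free the tessellation property (all edges unit length in the grid metric) that the paper only asserts after its proof by reference to the same citation; this is the strengthening of \autoref{thm:kron-nice} alluded to after \autoref{thm:kron-hard}. One small point: your caveat that simplicity requires $\pi(1) \not\equiv \pm 1 \pmod N$ ``in the regime of interest'' is glibber than necessary --- $\pi(1) = 1$ with $N = \pi(1) + \pi(N-1)$ does occur for some $\theta$ (and degenerately at small $N$) --- but it is harmless, since in that case the two Hamiltonian cycles coincide and the quotient drawing is still a crossing-free embedding of the resulting doubled cycle, the two parallel arcs meeting only at shared endpoints. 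So no gap, just a degenerate case better handled by noting it is trivial than by excluding it.
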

 The detailed proof is given in \autoref{sec:kron-proof}. The condition $N = \pi(1) + \pi(N-1)$ does not occur very often. It can be shown that $N$ for which this holds grow exponentially, based on the continued fraction expansion of $\theta$. For all other $N$, our graph can be embedded into the torus, up to the deletion of a single edge.

\begin{theorem}[label=thm:kron-hard, restate=thmkronhard]
For $N \neq \pi(1) + \pi(N-1)$, the $N$-th Kronecker sequence graph can be embedded into a torus after deleting the single edge $(N-1, 0)$. 
\end{theorem}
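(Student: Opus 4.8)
The plan is to leverage the already-established Theorem~\ref{thm:kron-nice} by reducing the general case $N \neq \pi(1) + \pi(N-1)$ to the solved case. The key structural fact to exploit is the one highlighted in the discussion: when $N = \pi(1) + \pi(N-1)$, the consecutive gaps $\pi(i+1) - \pi(i)$ in the sorting permutation take on exactly two values that are congruent modulo $N$, and this two-gap behavior is precisely what makes the torus embedding possible. The three-distance theorem (also called the three-gap theorem) for Kronecker sequences guarantees that the points $a_0, \dots, a_{N-1}$ partition the circle into arcs of at most three distinct lengths; correspondingly, the gaps in the permutation $\pi$ take at most three values. First I would make this precise: when $N = \pi(1)+\pi(N-1)$ two of the three gap-values coincide (or become congruent mod $N$), collapsing to the clean two-gap structure, whereas for generic $N$ all three gaps genuinely appear.

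The core idea of the proof is that deleting the edge $(N-1,0)$ removes exactly the obstruction created by the third gap. So I would first analyze what goes wrong in the embedding when $N \neq \pi(1)+\pi(N-1)$: the failure should be localized to a single crossing or a single non-planar identification induced by the ``rogue'' edge, consistent with the picture in Figure~\ref{fig:kron-seq-graphs} where the large graph resembles a torus except for one edge. After deleting $(N-1,0)$, the remaining graph is the union of the sorted Hamiltonian cycle together with the path $0 - 1 - \cdots - (N-1)$ (rather than the full sequence cycle). I would show that this deleted graph coincides, up to relabeling, with a sequence graph satisfying the good two-gap condition, or that it can be laid out on the torus directly using the two dominant gap values, treating the third gap's single occurrence as the endpoint where the cycle was broken.

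Concretely, the steps I would carry out are: (1) invoke the three-gap theorem to enumerate the possible values of $\pi(i+1)-\pi(i) \pmod N$, identifying the two ``large'' gap classes and showing the third gap occurs in a controlled, essentially unique position tied to the missing edge; (2) build the torus as a quotient of a fundamental parallelogram whose two side-identifications correspond to the two dominant gaps, exactly as in the proof of Theorem~\ref{thm:kron-nice}; (3) place the vertices $0,\dots,N-1$ on a lattice inside this parallelogram so that both the sorted cycle and the sequence path follow straight lattice directions with no crossings; and (4) verify that the only edge whose placement would force a crossing or wrap inconsistently is precisely $(N-1,0)$, so that its deletion yields a genuine embedding.

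The hard part will be step (1) together with step (4): pinning down exactly why the single problematic gap corresponds to the edge $(N-1,0)$ and no other, and confirming that this one deletion is both necessary and sufficient to remove all crossings. In particular I expect the delicate bookkeeping to be in tracking how the third (shortest) gap from the three-gap theorem interacts with the cyclic closure of the sequence Hamiltonian cycle at the vertex $N-1$, since that is the vertex whose sequence-successor ``wraps around'' to $0$. Making rigorous the claim that deleting this one edge restores the clean two-gap torus structure — rather than merely removing one of several crossings — is where the argument must be most careful.
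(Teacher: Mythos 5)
There is a genuine gap, and it sits exactly where you yourself flagged the difficulty: the localization claim in step (1) is false. In van Ravenstein's formulation of the three-gap theorem (the one the paper uses for \autoref{thm:kron-nice}), writing $p = \pi(1)$ and $q = \pi(N-1)$, the gap $S(i) - i$ equals $p$ for $0 \le i < N-p$, equals $-q$ for $q \le i < N$, and equals $p - q$ for $N - p \le i < q$. One always has $N \le p + q$, and when $N < p + q$ the third gap occurs for exactly $p + q - N$ values of $i$ --- a quantity that ranges over everything between consecutive ``nice'' values of $N$ and is typically large, not ``essentially unique.'' Worse, the third-gap edges all lie on the sorted cycle $C_\pi$, whereas the edge you delete, $(N-1,0)$, lies on $C_1$; deleting it removes none of the third-gap edges, so there is no mechanism by which this single deletion ``restores the clean two-gap structure.'' Your fallback claim --- that $G_N$ minus $(N-1,0)$ coincides, up to relabeling, with a sequence graph satisfying the two-gap condition --- also cannot hold: the gaps of $C_{\pi,N}$ genuinely take three distinct residues modulo $N$ when $N < p+q$, and no relabeling of an $N$-vertex graph changes that. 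Consequently steps (2)--(4) have nothing to stand on: a lattice placement driven by the two dominant gap directions leaves all $p + q - N$ third-gap edges unaccounted for, not just one, and your step (4) verification would fail at every one of them.

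It is worth contrasting this with the paper's actual argument, which sidesteps gap bookkeeping entirely. The paper proves (Lemma~\ref{thm:top-subgraphs}) that for any $M > N$, the graph $G_N'$ obtained from $G_N$ by deleting $(N-1,0)$ is a topological minor of $G_M$: delete the $C_1$ edges $(N, N+1), \dots, (M-1, 0)$, then contract away the now degree-2 vertices $N, \dots, M-1$ along $C_{\pi, M}$ --- appending a term to the sequence merely subdivides one edge of the sorted cycle, so these contractions recover $C_{\pi,N}$ exactly. One then chooses $M > N$ with $M = \pi(1) + \pi(M-1)$ (such $M$ exist at arbitrarily large scales, governed by the continued fraction of $\theta$), applies \autoref{thm:kron-nice} to embed $G_M$ in the torus, and uses the fact that embeddability in a surface is minor-closed. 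This also explains the special role of $(N-1,0)$ without any crossing analysis: it is precisely the one $C_1$ edge of $G_N$ absent from the truncation of $C_{1,M}$, while each third-gap edge of $G_N$ appears in $G_M$ as a contracted path of two-gap edges --- the resolution of the multiplicity problem your plan misses. If you insist on a direct embedding, the workable construction is not the two-gap lattice but the suspension picture: place vertex $i$ at $(a_i, 0)$ on the torus $[0,1]^2$ with $(x,1) \sim (x + \theta \bmod 1,\, 0)$, draw $C_\pi$ edges as the arcs between circularly consecutive points and each $C_1$ edge $(i, i+1)$ as a vertical flow segment; every such segment lands correctly except the one from $a_{N-1}$, which arrives at $\{N\theta\} \ne a_0$, so exactly $(N-1,0)$ must be deleted.
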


The above theorem follows from a more general proof that applies to any sequence graph; the detailed proof is in \autoref{sec:top-subgraphs}.
However, \autoref{thm:kron-nice} can be strengthened to show that the sequence graphs tesselate the torus. Our proof for \autoref{thm:kron-hard} through the general case loses this information, but the resulting embedding still approximates the torus well, indicating that structure is mostly preserved. 

\subsection{Van der Corput} \label{sec:vdc-intro}
The van der Corput sequence~\cite{vdc} is a famous sequence which is well distributed in the unit interval. There is a $b$-ary van der Corput sequence for any base $b$; we focus on the binary case. This is arguably the most common as it is the most uniformly distributed in $[0,1]$. 

The (binary) \textbf{van der Corput} sequence $a_0, a_1, \dots$ is the sequence where $a_n$ is the number obtained by reversing the binary expansion of $n$ and then placing it after the decimal point. To be precise, if $d_k(n)$ is the $k$-th binary digit of $n$ (i.e. $n = \sum_{k=0}^{L-1} d_k(n)2^k$), then we can write $a_n$ as $\sum_{k=0}^{L-1} d_k(n)/2^{k+1}$.

\begin{figure}[!h]
    \centering
    \begin{minipage}{0.48\textwidth}
        \centering
        \includegraphics[width=0.7\textwidth]{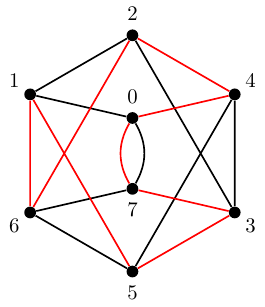}
    \end{minipage}
        \begin{minipage}{0.48\textwidth}
        \centering
        \includegraphics[width=0.8\textwidth]{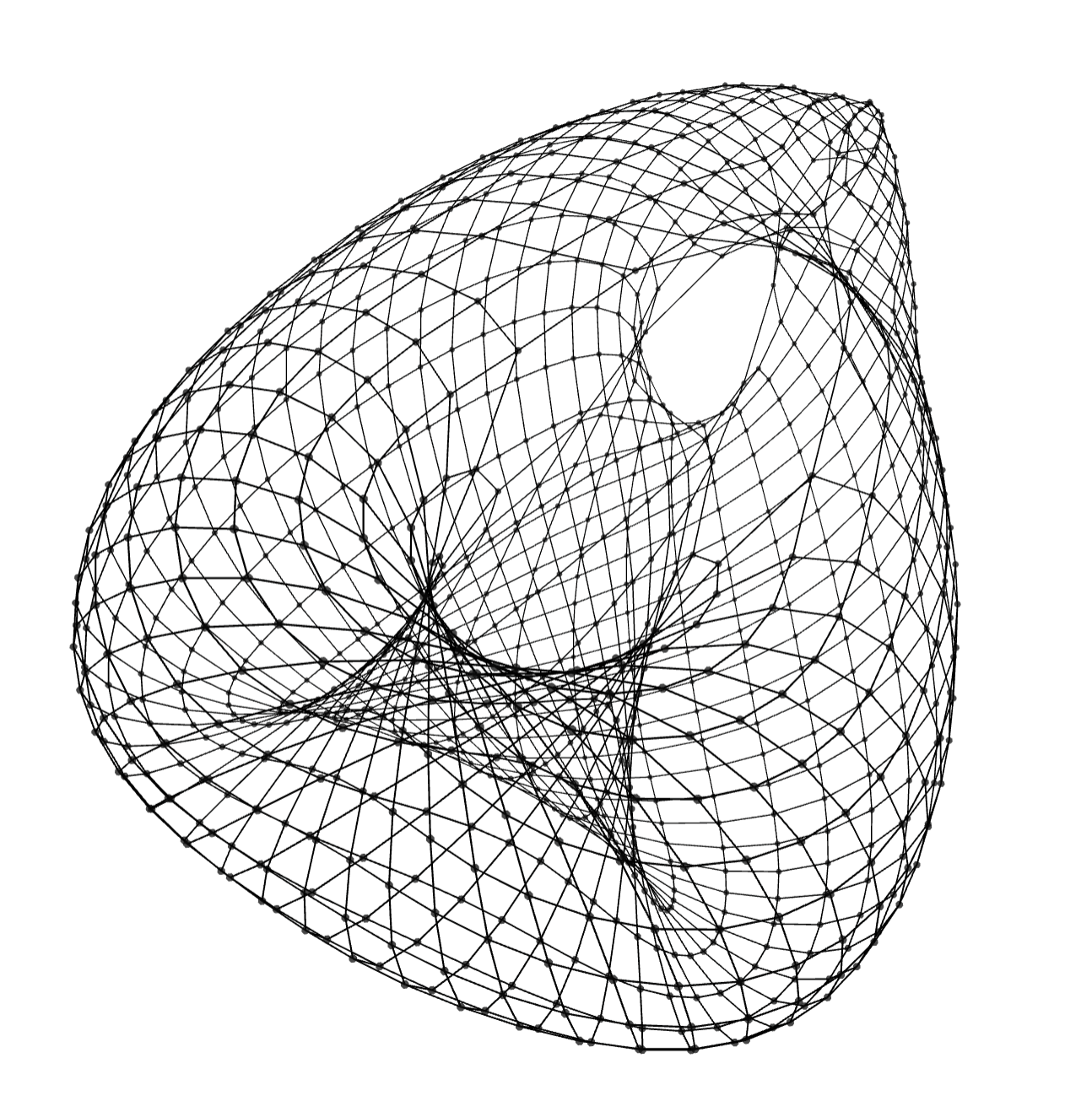}
    \end{minipage}
    \caption{Left: van der Corput sequence graph at $N = 8$. Right: van der Corput sequence graph at $N = 1024$ resembles a surface.}
    \label{fig:vdc-seq-graphs}
\end{figure}
\begin{example}
    The first 8 nonnegative integers in binary are 
    $0, 1, 10, 11, 100, 101$, $110, 111$.
    By reversing and placing them after the decimal point, we obtain the first 8 terms of the binary van der Corput sequence $0.0, 0.1, 0.01, 0.11, 0.001, 0.101$, $0.011$, $0.111$.
    In decimal, these terms are $0, 0.5, 0.25, 0.75, 0.125, 0.625, 0.375, 0.875$, which are sorted by the permutation $(0, 4, 2, 6, 1, 5, 3, 7)$. The corresponding sequence graph is illustrated in \autoref{fig:vdc-seq-graphs}.
\end{example}
The binary van der Corput sequence graphs embed into a surface that is a bit more exotic than the torus, called the \textbf{Chamanara surface} \cite{, artigiani2025veechcovers,chamanara2004infinite}. 
It is illustrated in \autoref{fig:chamanara-surf}. 
We shall define the surface as a quotient space of the square, by identifying segments on the sides.
To construct it, consider a square with sides of length 1. We divide the top and bottom side both into two halves. Then we glue the left half on top with the right half on bottom; call this segment $h_1$. Next we divide the remaining halves, and again glue the top left part with the bottom right part from that division (calling this segment $h_2$). We repeat this process ad infinitum, obtaining segments $h_3, h_4, \dots$ and so on. We do the same with the left and right sides, identifying the top part of the left side with the bottom part of the right side. This produces segments $v_1, v_2, \dots$ and so on. The resulting surface is the {Chamanara surface}. 
\begin{figure}[h!]
    \centering
    \includegraphics[width=0.5\textwidth]{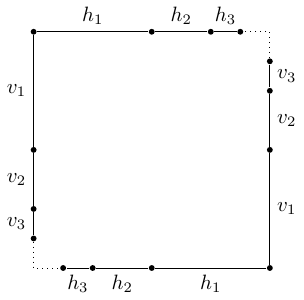}
    \caption{The Chamanara Surface}
    \label{fig:chamanara-surf}
\end{figure}

The Chamanara surface is a connected two-dimensional manifold. Since all of its gluings identify parallel segments of equal length, there is a maximal atlas on the surface with all transition functions given by translations. Thus the Chamanara surface is an example of a \emph{translation surface}. However it is not compact, has infinite genus and singularities with infinite angle. Therefore it does not fall under the regime of \emph{finite} translation surfaces, and is instead an example of a \emph{Loch Ness Monster}. For a detailed discussion on translation surfaces of infinite type, see the book by Delecroix, Hubert and Valdez \cite{delecroix2024infinite}.

Chamanara \cite{chamanara2004infinite} introduced a one parameter family of surfaces $X_\alpha$ for $\alpha \in (0, 1)$ as constructions of surfaces with non-elementary affine automorphism groups; the Chamanara surface we define is $X_\alpha$ with $\alpha=1/2$. 
Since the original definition, this surface has been a key example of a translation surface with large Veech group; see the notes by Herrlich and Randecker \cite{herrlich2016veechchamanara} for a review of the calculation, or the introduction of Randecker \cite{randecker2018wild} for an overview of the literature.
Recently, Artigiani, Randecker, Sadanand, Valdez and Weitze-Schmith\"{u}sen \cite{artigiani2025veechcovers} used covers of the Chamanara surface to produce novel constructions of translation surfaces whose Veech groups are the free groups.

This paper finds the Chamanara surface in a different context from the existing literature, as the surface which van der Corput sequence graphs embed into.
\begin{theorem}[label=thm:vdc-nice, restate=thmvdcnice]
    For $N = 4^m$, the $N$-th binary van der Corput sequence graph can be embedded into the Chamanara surface.
\end{theorem}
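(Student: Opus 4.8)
The plan is to first make the two Hamiltonian cycles completely explicit for $N=2^L$. Here the values $a_0,\dots,a_{N-1}$ are exactly the dyadic rationals $\{k/2^L\}$, and since $a_n=\mathrm{bitrev}_L(n)/2^L$ the sorting permutation is the $L$-bit bit-reversal $\pi=\mathrm{bitrev}_L$, an involution. Thus the value cycle joins vertices whose values are consecutive dyadics, and the position cycle joins $n$ to $n+1$. The observation I would build everything on is that the position cycle is the orbit of the von Neumann--Kakutani odometer $T$: on each interval $I_k=[1-2^{-(k-1)},1-2^{-k})$ one has $T(x)=x+3\cdot 2^{-k}-1$, and then $a_{n+1}=T(a_n)$, so $a_n=T^n(0)$ is precisely the van der Corput sequence. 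The coincidence that summons the Chamanara surface is that $T$ \emph{is} the horizontal gluing map of the surface: the gluing $h_k$ identifies the top segment $[1-2^{-(k-1)},1-2^{-k}]$ with the bottom segment $[2^{-k},2^{-(k-1)}]$ by the translation $x\mapsto T(x)$.

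With this I would construct the embedding directly inside the defining square. Place vertex $n$ on the bottom edge at $x=a_n$ (or, to keep vertices off the segment endpoints, at the half-grid point $a_n+2^{-L-1}$, which $T$ preserves away from the corners). Draw each value-cycle edge as the short sub-interval of the bottom edge joining two consecutive vertices, and draw each position-cycle edge $(n,n+1)$ as the full vertical segment at $x=a_n$: it runs up to $(a_n,1)$, and since $h_k$ glues $(a_n,1)$ to $(T(a_n),0)=(a_{n+1},0)$, this single segment closes up to land exactly on vertex $n+1$. Because distinct vertices have distinct $x$-coordinates, the vertical segments are pairwise disjoint and meet the bottom edge only at their endpoints, so there are no crossings in the open square, and a local check shows the four edges at a generic bottom vertex alternate value/position around the glued seam. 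As the whole picture lives in the defining square with its Chamanara identifications, the target surface is the Chamanara surface by construction, so the content of the theorem is really that the drawing is a valid (crossing-free) embedding.

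The genuine work, and the point where the hypothesis $N=4^m$ enters, is the boundary behaviour. Two families of vertices are special: the corner vertex $0$ (at $x=0$) together with its mirror $N-1$ (at $x=1-2^{-L}$), and the vertices landing on the dyadic endpoints $2^{-j}$, which sit at or near the cone singularities of the surface. Both the position- and value-cycle wraparound edges join vertex $0$ to vertex $N-1$, and these are exactly the two edges that the odometer picture does not close up, since for the infinite map $T(1-2^{-L})=2^{-L-1}$ is not a vertex position; they are the van der Corput analogue of the Kronecker ``rogue edge.'' I would route them through the right edge and the vertical gluings $v_i$, exploiting the order-two symmetry that simultaneously exchanges the horizontal and vertical gluings of the surface and the two Hamiltonian cycles of the graph, the latter being $n\mapsto\mathrm{bitrev}(n)$. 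The crux, and the main obstacle, is to show that this closing-up is consistent precisely when $L=2m$ is even, so that the horizontal and vertical odometer structures agree at the bottom-right corner, and to verify that the finitely many vertices at segment endpoints introduce no crossings near the singular set.

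Finally, I expect the evenness condition to be illuminated by a self-similar reorganization that also explains the base $4$. Splitting the $2m$ bits of $n$ as $n=2^m q+r$ gives $a_n=\alpha_r+2^{-m}\alpha_q$, where $\alpha_j$ is the $m$-bit van der Corput value; this presents the level-$m$ graph as $2^m$ blocks of size $2^m$ and mirrors the self-similarity of the Chamanara surface under its affine automorphism with derivative $\mathrm{diag}(1/2,2)$. One could then run an induction in which the level-$(m{+}1)$ embedding refines the level-$m$ one compatibly with the recursive subdivision defining the $h_i$ and $v_i$, which would both handle the corner identifications uniformly in $m$ and recover the stronger statement that the graphs tessellate the surface. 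I would keep the odometer construction as the backbone and use this self-similarity mainly to control the singular corner.
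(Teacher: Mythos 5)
Your central observation is correct and genuinely different from the paper's route: the gluing $h_k$ of the Chamanara surface, read as a map from the top edge to the bottom edge, is the translation $x \mapsto x + 3\cdot 2^{-k} - 1$ on $[1-2^{-(k-1)}, 1-2^{-k}]$, which is exactly the von Neumann--Kakutani odometer $T$ on $I_k$; so the surface is a suspension of the very map that generates the sequence, and the position-cycle edges close up through the seam for free. The paper instead splits the $2m$ bits of each vertex into halves $b_0, b_1$, places vertices on the $2^m \times 2^m$ integer lattice of a scaled square, and runs $C_\pi$ edges along vertical grid lines and $C_1$ edges along horizontal ones, with wraparounds through the $h_k, v_k$ identifications. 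Your one-dimensional seam picture, if completed, would be more conceptual and would in fact prove more than the statement: it makes no visible use of $N = 4^m$ and should give an embedding for every $N = 2^L$ (this would not contradict \autoref{thm:vdc-hard}, which only asserts that deleting $(N-1,0)$ \emph{suffices} when $N \ne 4^m$).

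However, as submitted this is a program, not a proof, with two concrete gaps. First, by your own account you stop exactly at the crux: the routing of the two rogue edges $(N-1,0)$ is deferred, and your conjecture that closing them up forces $L = 2m$ even is unsupported and almost certainly a red herring in your own construction. The two half-strips adjacent to vertices $0$ and $N-1$ contain no graph; the $h$-gluing at vertex $0$'s seam position opens into the strip bordered by vertex $N-1$'s vertical, giving one edge, and the other can run through the rightmost strip, across the interior of $v_1$, and down the leftmost strip to vertex $0$ --- no parity condition appears anywhere. (This is the analogue of the paper's Case 5 reroute through $h_{m+1}$ and $v_{m+1}$.) The evenness of $L$ matters only for the paper's lattice construction, where the bit string must split into two equal halves. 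Second, your singularity bookkeeping fails as stated: the seam points $2^{-j}$ are endpoints of the gluing segments and hence singular points not on the surface (the paper's shift by $\delta$ exists precisely to dodge them), yet with your half-grid nudge each $2^{-j}$ with $1 \le j \le L$ lies in the \emph{interior} of one of your value-cycle edges, and vertex $0$ itself lands exactly on the singular point $2^{-L-1}$, the shared endpoint of $h_{L+1}$ and $h_{L+2}$ (similarly the top endpoint of vertex $N-1$'s vertical hits the singular point $1 - 2^{-L-1}$). These are repairable --- choose a generic nudge $\varepsilon$ and detour the finitely many affected seam edges slightly into the open square --- but as written the drawing passes through points that are not in the surface, so it is not an embedding until both items are carried out.
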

The proof is given in \autoref{sec:vdc-proof}. Our proof can be extended to show that the $b$-ary van der Corput sequence embeds into a Chamanara surface $X_\alpha$ with $\alpha = 1/b$.
Similar to the Kronecker case, we can embed sequence graphs without the perfect number of vertices into the surface by removing a single edge.

\begin{theorem}[label=thm:vdc-hard, restate=thmvdchard]
    For $N \ne 4^m$, the $N$-th binary van der Corput sequence graph can be embedded into the Chamanara surface after deleting the single edge $(N-1, 0)$.
\end{theorem}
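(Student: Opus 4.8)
The plan is to reduce the statement to the already-established perfect case \autoref{thm:vdc-nice} by exhibiting the graph we care about as a topological subgraph of a larger van der Corput sequence graph, exactly as the general argument behind \autoref{thm:kron-hard} does (see \autoref{sec:top-subgraphs}). Since the powers of $4$ are unbounded, for any $N$ that is not itself a power of $4$ I can choose the smallest $m$ with $4^m > N$ and set $N' = 4^m$. By \autoref{thm:vdc-nice} the $N'$-th van der Corput sequence graph embeds into the Chamanara surface. It therefore suffices to show that the $N$-th sequence graph with the edge $(N-1,0)$ deleted sits inside the $N'$-th sequence graph as a topological subgraph, because embeddability into a surface is inherited by subgraphs and is preserved under suppressing degree-$2$ vertices.

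To produce this topological subgraph, I would compare the two Hamiltonian cycles of the $N$- and $N'$-graphs on the common vertex set $\{0,\dots,N-1\}$. The sequence cycle of the $N'$-graph contains the path $0 - 1 - \cdots - (N-1)$ verbatim; this is precisely the sequence cycle of the $N$-graph with its closing edge $(N-1,0)$ removed, which is exactly the edge we are allowed to delete. For the order cycle, note that the values $a_0,\dots,a_{N-1}$ occur in the same relative order inside $a_0,\dots,a_{N'-1}$; the only difference is that the new indices $N,\dots,N'-1$ are interleaved among them. I would first delete every sequence edge of the $N'$-graph incident to a new index, which leaves each new vertex with degree exactly $2$ (its two order edges), and then suppress all of these degree-$2$ vertices. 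Smoothing them reconnects each pair of consecutive old values, so the remaining order cycle is exactly the order cycle of the $N$-graph. Together with the retained sequence path, this realizes the $N$-graph minus $(N-1,0)$ as the result of deleting edges and suppressing degree-$2$ vertices in the $N'$-graph, i.e.\ as a topological subgraph.

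The main thing to verify carefully is this last smoothing step: one must check that restricting the order cycle of the $N'$-graph to the old vertices and suppressing the interleaved new vertices reproduces precisely the ``consecutive in value'' adjacencies of the $N$-graph, including the wrap-around arc from the largest to the smallest old value (which for the van der Corput sequence is $a_0 = 0$). This is where the general lemma of \autoref{sec:top-subgraphs} does the real work, and since that lemma is insensitive to the particular sequence, the only input special to the van der Corput case is the elementary observation that some power of $4$ exceeds $N$. I would therefore expect the bulk of the argument to be the bookkeeping for the topological-subgraph claim rather than anything specific to van der Corput; once that claim is in hand, combining it with \autoref{thm:vdc-nice} closes the proof immediately.
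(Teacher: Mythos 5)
Your proposal is correct and follows essentially the same route as the paper: the paper proves the general \autoref{thm:top-subgraphs}, showing that for $N<M$ the $N$-th sequence graph minus the edge $(N-1,0)$ is a (topological) minor of $G_M$ via exactly your deletion-plus-smoothing of the interleaved new degree-$2$ vertices, and then deduces \autoref{thm:vdc-hard} by taking $M = 4^m > N$ and invoking \autoref{thm:vdc-nice}. Your bookkeeping even makes explicit a point the paper states slightly loosely, namely that the sequence edge $(N-1,N)$ must also be deleted so that every new vertex has degree exactly two before suppression.
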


One might be dissatisfied by these results as an explanation for the pictures of van der Corput sequence graphs with high $N$ (\autoref{fig:vdc-seq-graphs}), which look more like a genus two surface than a Loch Ness Monster. This discrepancy is explained by two factors. Firstly, while the infinitely many segment identifications in the Chamanara surface create an exotic topology, they do not affect the geometry at most points of the surface. Away from the singularities, the metric is similar between the Chamanara surface and an approximation with finitely many gluings. Thus only the structure of this approximation is visible from a high level in our picture. Secondly, the picture is obtained by an embedding which minimizes crossings according to an imperfect heuristic. As a result, for $N = 4^m$ there are $O(m)$ edges in the picture with crossings. These fine-grain structures show regions where the high level approximation differs from the true shape of the Chamanara surface.

\subsection{Sequence graphs from interval exchange transforms} \label{sec:iet-intro}

We found similar results in both the Kronecker and van der Corput sequences, with a precise embedding for some rare well-chosen values of $N$, and an embedding up to one edge, always $(N-1,0)$, for all other values of $N$. We present here a more general phenomenon that could explain these similarities. 

A key fact about the Kronecker sequence, used in our proof of \autoref{thm:kron-nice}, is that there are only two possible differences $a_{i+1} - a_{i}$. So $a_{i+1}$ in this case is a piecewise linear function of $a_i$. The van der Corput sequence has a similar structure, though it is more subtle and not explicitly spelled out in our proof of \autoref{thm:vdc-nice}. In that case, among the first $2^m$ terms there are $m$ possible differences $a_{i+1} - a_i$.
Then $a_{i+1}$ is a piecewise linear function of $a_i$, with infinitely many pieces which are also exponentially decaying in size. In both cases, the linear functions on the pieces are translations, reminding us of \emph{interval exchange transformations}.

\begin{figure}[!h]
    \centering
    \begin{minipage}{0.48\textwidth}
        \centering
        \includegraphics[width=0.85\textwidth]{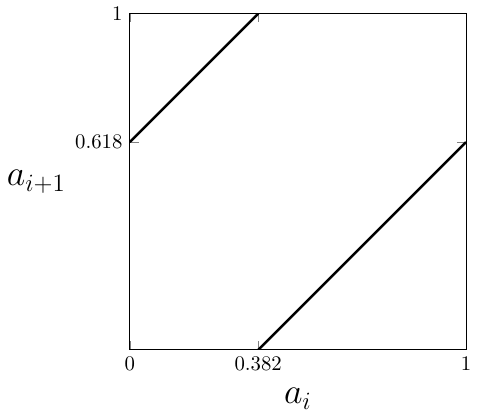}
    \end{minipage}
    \begin{minipage}{0.48\textwidth}
        \centering
        \includegraphics[width=0.85\textwidth]{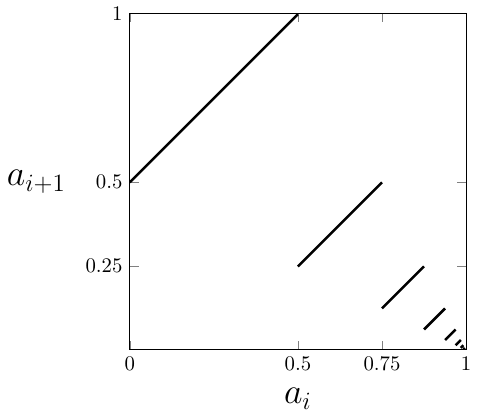}
    \end{minipage}
    \caption{Plots of $(a_i, a_{i+1})$ as $i$ varies for Kronecker sequence on the left and van der Corput on the right.}
    \label{fig:vdc-kron-iets}
\end{figure}

Given a permutation $\pi: [k] \to [k]$ and a choice of subinterval lengths $\lambda = (\lambda_1, \dots, \lambda_n)$ such that $\sum_{j=1}^k \lambda_j = 1$, the associated \textbf{interval exchange transformation} $T_{\pi, \lambda}: [0, 1] \to [0, 1]$ is the map which acts piecewise linearly on subintervals of $[0,1]$ with lengths in $\lambda$, rearranging them so the subinterval at position $j$ is moved to position $\pi(j)$.
More precisely, for $j \in [k]$ let 
\[
    s_j = \sum_{\ell=1}^{j-1} \lambda_\ell \quad \text{and} \quad s_j' = \sum_{\ell=1}^{\pi(j)-1} \lambda_{\pi^{-1}(\ell)}.
\]
Then if $x$ lies in the subinterval $[s_j, s_{j+1})$, we have $T_{\pi, \lambda}(x) =  x - s_j + s_j'$. For a more detailed introduction to interval exchange transformations, see the notes by Viana \cite{viana2006ergodic} or Yoccoz \cite{yoccoz2010interval}.

The Kronecker sequence with parameter $\theta$ evolves as $a_{i+1} = T_{\pi, \lambda}(a_i)$ with the interval exchange transformation $T_{\pi, \lambda}$ defined by 
\[\pi = (2, 1) \quad\text{and}\quad \lambda = (\theta \mod 1, 1 - \theta \mod 1).\]
The van der Corput sequence also evolves by an interval exchange transformation $a_{i+1} = T(a_i)$. That transformation involves countably many intervals, and is called the \emph{dyadic odometer} \cite{downaroicz2005survey, iommi2024odometers} or \emph{von Neumann–Kakutani adding machine} \cite{bruin2023odometers}. 

\begin{figure}[!h]
    \centering
        \begin{minipage}{0.48\textwidth}
        \centering
        \includegraphics[width=0.85\textwidth]{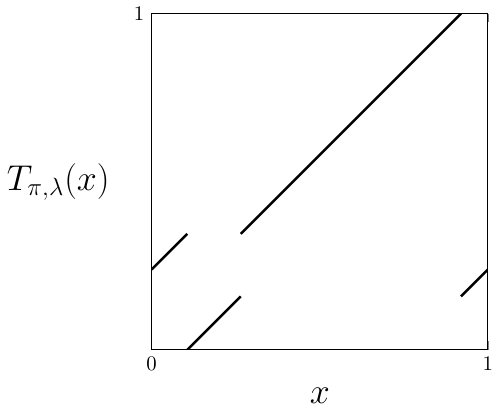}
    \end{minipage}
    \begin{minipage}{0.48\textwidth}
        \centering
        \includegraphics[width=0.85\textwidth]{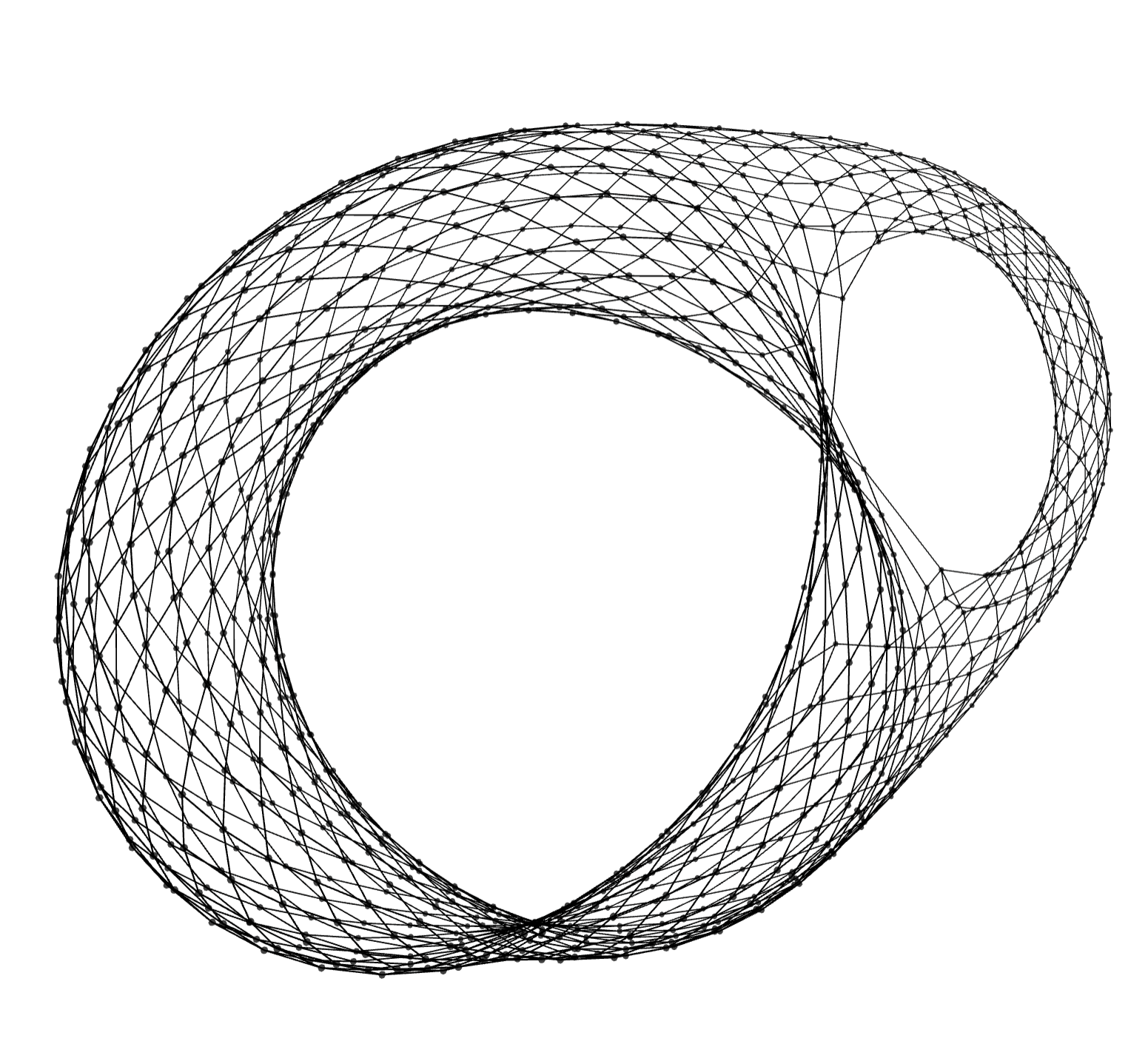}
    \end{minipage}
    \begin{minipage}{0.48\textwidth}
        \centering
        \includegraphics[width=0.85\textwidth]{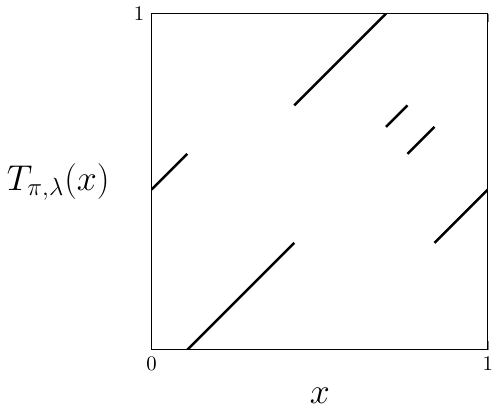}
    \end{minipage}
    \begin{minipage}{0.48\textwidth}
        \centering
        \includegraphics[width=0.85\textwidth]{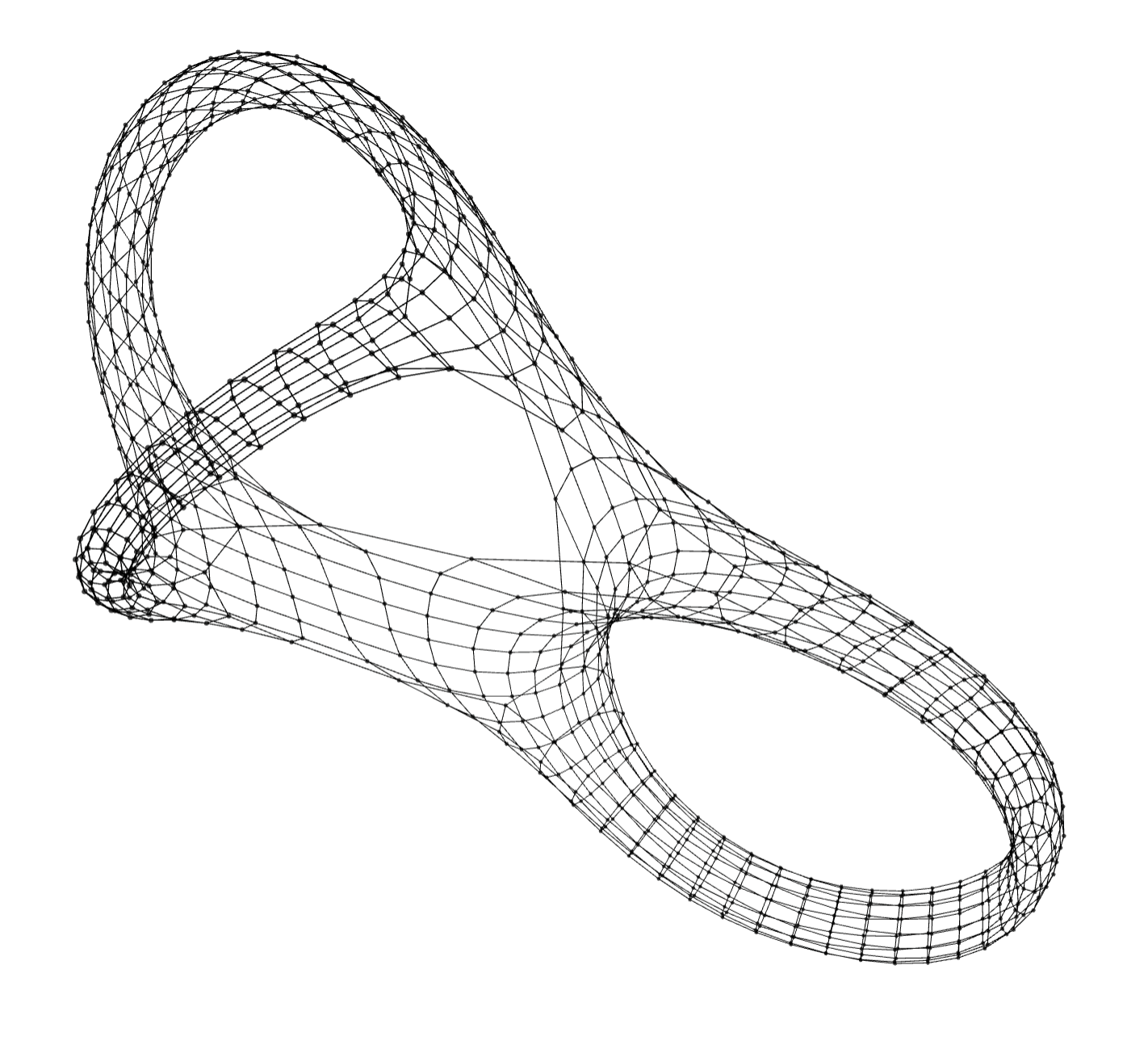}
    \end{minipage}
    \caption{The sequence generated by iterating the transform $T_{\pi, \lambda}$ on the left produces surface-like sequence graphs (with $N=1000$) on the right.}
    \label{fig:iet-surfaces}
\end{figure}

In general we can define a sequence by setting $a_0 = 0$ and $a_{i+1} = T(a_i)$. If $T$ is not ergodic, this process might yield a sequence that is periodic or only visits a proper subset of $[0, 1]$. But with some appropriate hypotheses on the dynamics of $T$, the produced sequences seem to have sequence graphs which look like surfaces. \autoref{fig:iet-surfaces} illustrates two examples, with the top row produced by $\pi = (3, 1, 4, 2)$ and $\lambda = \paren{1/(2\pi), 1/(4\pi), 1/(3\pi), 1 - \sum_{j=2}^4 1/(j\pi)}$, and the bottom row produced by $\pi = (3, 1, 6, 5, 4, 2)$ and $\lambda = \paren{1/\pi, 1/(2\pi), 1/(3\pi), 1/(4\pi), 1/(5\pi), 1 - \sum_{j=1}^5 1/(j\pi)}$. In both cases the sequence graphs look like a surface with a specific genus, after deleting the single rogue edge $(N-1, 0)$. 

The connection between interval exchange transformations and translation surfaces is not novel, even in the infinite type setting. For instance, Lindsey and Trevi\~no \cite{lindsey2016flatsurfmodels} produced a large class of surfaces, which includes the Chamanara surface, using interval exchange transformations coming from generalized Bratteli diagrams. 
We expect that sequence graphs should also fit into that connection. If a sequence is defined by iterating an interval exchange transformation $T$ with proper dynamics, the produced sequence graphs should embed into some surface whose topology and geometry should be governed by the dynamics of $T$. 
A $d+2$-gap theorem for such sequences was shown by Taha \cite[Theorem 3]{taha2018gaptheorem}, extending the three gap theorem we used to prove the Kronecker sequence graph embedding in \autoref{thm:kron-nice}. Unfortunately, for both that theorem and \autoref{thm:vdc-nice}, our embeddings are ad hoc and rely heavily on the structure of the specific sequence. 
Understanding the general case of this question is therefore left to future work.

\section{Proofs} \label{sec:proofs}
We first establish some notation shared by all proofs in \autoref{sec:directions-proof}. The embedding of the Kronecker sequence graph (\autoref{thm:kron-nice}) is proved in \autoref{sec:kron-proof}, and the embedding of the van der Corput sequence graph (\autoref{thm:vdc-nice}) is proved in \autoref{sec:vdc-proof}. Finally, we prove a result in \autoref{sec:top-subgraphs} from which both \autoref{thm:kron-hard} and \autoref{thm:vdc-hard} follow. 

\subsection{Directions in the sequence graph} \label{sec:directions-proof}
The $N$-th sequence graph contains edges of two types: those connecting vertices in the order defined by the sequence, and those connecting vertices in sorted order. These two types define two Hamiltonian cycles which, if our sequence graph approximates a surface, should behave like the two orthogonal directions in the tangent space. The edges connecting vertices in sequence order are of the form $(i, i+1)$, except the very last one being $(N-1, 0)$; we label this Hamiltonian cycle $C_1$. The edges connecting vertices in sorted order are of the form $(\pi(i), \pi(i+1))$, except the very last one being $(\pi(N-1), \pi(0))$; we label this Hamiltonian cycle $C_\pi$.

It is useful to define a successor function $S: \{0, 1, \dots, N-1\} \to \{0, 1, \dots, N-1\}$ which identifies the vertex that follows any given vertex $i$ in the cycle $C_\pi$. This function is given by
\[
    S(i) = \pi\paren{\pi^{-1}(i) + 1 \mod N}.
\]
$S$ is a bijection and thus has a well-defined inverse. For any vertex $i$ in the $N$-th sequence graph, the four edges incident to it connect it to $i+1, i-1, S(i), S^{-1}(i)$, with the first two connecting along edges in $C_1$ and the last two connecting by edges in $C_\pi$.

\subsection{The Kronecker embedding} \label{sec:kron-proof}
Graphs where the vertex set is $\{0, 1, \ldots, N-1\}$ and the edge set connects each vertex $v$ to $v \pm c_i \mod N$ for constants $c_1, c_2, \ldots, c_k$ are called \textbf{circulant graphs}. Circulant graphs have very well understood properties; in particular, Costa, Strapasson, Alves and Carlos \cite[Proposition 4]{COSTA2010369} have shown that any connected circulant graph $C_N(\{c_1, c_2\})$ embeds into the torus. 
We shall demonstrate that the Kronecker sequence graph, when $N= \pi(1) + \pi(N-1)$, is a particular circulant graph, and therefore has a torus embedding.

\begin{proof}[Proof of \autoref{thm:kron-nice}]
The structure of the Kronecker sequence can be described exactly by the \emph{Three Gap Theorem}, originally proved by S\'os \cite{Sos}. Our proof uses the formulation of this result given by Ravenstein \cite[Theorem 2.2]{van_Ravenstein_1988}, which describes our permutation in terms its gaps, i.e. the differences $S(i) - i$. These gaps take on one of three values, depending on $i$ as follows.
$$S(i) - i = \begin{cases}
\pi (1) &  0 \le i < N - \pi(1) \\
-\pi(N-1) & \pi(N-1)  \le i < N\\
\pi (1) - \pi(N-1) & N - \pi(1) \le i < \pi(N-1) \\
    \end{cases}
$$
In particular when $N = \pi(1) + \pi(N-1)$, there are no values of $i$ which satisfy the inequality in the last case. This, combined with the fact that $-\pi(N-1) \equiv \pi(1) \mod N$, allows us to write $$S(i) - i \equiv \pi(1) \mod N.$$

Recall that the $N$-th sequence graph has an edge set composed of two Hamiltonian cycles. The first, $C_1$, connects any two vertices that differ by $1$ modulo $N$. The second, $C_\pi$, connects $i$ to $S(i)$ for every $i$, which we know is equivalent to connecting every pair of vertices that differ by $\pi(1)$ modulo $N$. Thus we have that $$G_N \cong C_N (\{1, \pi(1)\}).$$
By the theorem from Costa, Strapasson, Alves and Carlos \cite[Proposition 4]{COSTA2010369}, we know that these graphs embed into the torus, and this concludes the proof. 
\end{proof}

The theorem used above also shows that the circulant graphs \emph{tesselate} the torus (can be embedded so that every edge has equal length). Therefore, the Kronecker sequence graph does not only embed into the torus, but for larger and larger values of $N$, they will better and better approximate the surface. 

\subsection{The van der Corput embedding} \label{sec:vdc-proof}
To highlight the geometry of the binary van der Corput sequence graph the natural representation of the vertices is not as integers, but as binary representations. For $N=4^m$, define the function $b: \{0, \dots, 4^m-1\} \to \{0, 1\}^{2m}$ as the operation sending $i$ to its binary representation. We also introduce the function $r: \{0, 1\}^{2m} \to \{0, 1\}^{2m}$ which reverses the binary string. For example, if $m=2$ we have $ b(1) = 0001$ and $b(10) = 1010$, while $r(b(1)) = 1000$ and $r(b(10)) = 0101$.
The function $b$ is a bijection from the vertices of the $N$-th binary van der Corput sequence graph and length $2m$ binary strings, and $r$ is also a bijection on length $2m$ binary strings. This perspective allows the successor function $S$ to be defined as binary addition ``from the left''.

\begin{lemma}[label=lem:vdc-succ]
    In the $4^m$-th binary van der Corput sequence graph, for any $i$ the binary representation of the successor $S(i)$ is given by $b( S (i)) = r(r\circ b(i) + 1)$. 
\end{lemma}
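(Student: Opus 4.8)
The plan is to exhibit an explicit order-preserving correspondence between the van der Corput values $a_n$ and the integers represented by the reversed bit strings $r(b(n))$, and then read off the successor as an increment by one.

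First I would record the elementary identity relating $a_n$ to the reversal. For a length-$2m$ binary string $s$, write $\langle s \rangle$ for the integer it represents with the most significant bit on the left, so that $\langle b(n)\rangle = n$. Unwinding the definition $a_n = \sum_{k=0}^{2m-1} d_k(n)/2^{k+1}$ and comparing it with $\langle r(b(n)) \rangle = \sum_{k=0}^{2m-1} d_k(n)\, 2^{2m-1-k}$ gives
\[
    a_n = \frac{\langle r(b(n))\rangle}{2^{2m}}.
\]
I would sanity-check this against the worked example (e.g. $r(b(1)) = 1000$ and $a_1 = 8/16 = 1/2$). Since $b$, $r$, and $\langle\cdot\rangle$ are all bijections onto $\{0,1\}^{2m}$ and $\{0,\dots,4^m-1\}$ respectively, the composite $n \mapsto \langle r(b(n))\rangle$ is a permutation of $\{0,\dots,4^m-1\}$; in particular the $a_n$ are distinct and $a_n < a_{n'}$ holds exactly when $\langle r(b(n))\rangle < \langle r(b(n'))\rangle$.

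Next I would use this to compute $S$. Because the sorted order of the values $a_n$ coincides with the natural order of the integers $\langle r(b(n))\rangle$, the successor of $i$ in $C_\pi$ is precisely the vertex whose reversed-string value is one larger. Using the cyclic definition $S(i) = \pi(\pi^{-1}(i)+1 \bmod N)$, this persists even at the top of the order, so in all cases $\langle r(b(S(i)))\rangle \equiv \langle r(b(i))\rangle + 1 \pmod{4^m}$. Interpreting addition of one modulo $4^m$ as binary increment of a $2m$-bit string (discarding overflow) turns this into the string identity $r(b(S(i))) = r(b(i)) + 1$, and applying the involution $r$ to both sides yields $b(S(i)) = r(r\circ b(i) + 1)$, as claimed.

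The only real subtlety is the wraparound: I must confirm that the cyclic successor, which sends the vertex of maximal value back to the vertex of minimal value, matches $+1$ modulo $4^m$, i.e. that binary increment of the all-ones $2m$-bit string returns the zero string under the convention that the overflow bit is dropped. This is exactly the sense in which the ``$+1$'' in the statement must be read, and once it is fixed the remaining manipulations are purely formal. The rest of the argument is bookkeeping about which end of the string the carries propagate from, which is harmless because the two applications of $r$ bracket the increment symmetrically.
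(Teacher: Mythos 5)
Your proposal is correct and takes essentially the same approach as the paper: your identity $a_n = \langle r(b(n))\rangle/4^m$ is the paper's $\pi(i) = b^{-1}\circ r \circ b(i)$ (equivalently $\pi^{-1}(i)$, since reversal is an involution) in different notation. Both arguments then reduce the cyclic successor $S(i) = \pi\paren{\pi^{-1}(i) + 1 \bmod 4^m}$ to a $2m$-bit binary increment of the reversed string, with the all-ones wraparound handled by dropping the overflow bit.
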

\begin{proof}
    The first $4^m$ terms of the van der Corput sequence consist of the fractions $k/4^m$ for $k \in \{0, 1, \dots, 4^m-1$\}. These fractions are ordered by their numerator, and thus we have $\pi(i) = 4^ma_i$.

    The van der Corput sequence is defined by reversing the binary representation as $a_i = b^{-1}\circ r \circ b(i) / 4^m$. Therefore we have $\pi(i) = b^{-1}\circ r \circ b(i)$. Taking the inverse and using the fact that $r^{-1} = r$, we get $\pi^{-1}(i) = b^{-1}\circ r \circ b(i)$. Therefore we can calculate the successor function
    \[
        S(i) = \pi\paren{\pi^{-1}(i) + 1 \mod 4^m} = b^{-1}\circ r \circ b\paren{b^{-1}\circ r \circ b(i) + 1 \mod 4^m}
    \]
    We can treat adding 1 to an integer the same as adding 1 to the binary representation with $2m$ bits. Thus we can conclude that $b( S (i)) = r(r\circ b(i) + 1)$.
\end{proof}

Note that our operation $b( S (i)) = r(r\circ b(i) + 1)$ amounts to taking the binary representation $b(i)$, reversing it, adding 1 and then reversing back again. This is equivalent to binary addition ``from the left''.

We are now ready to describe the embedding of the $4^m$-th van der Corput sequence graph in the Chamanara surface. Our embedding splits the binary representation of the vertices into two parts. For a given $m$, let $b_0, b_1: \{0, 1, \dots, 4^m-1\} \to \{0, 1\}^m$ be the functions such that, from the left, $b_0(i)$ are the first $m$ bits of $b(i)$ and $b_1(i)$ are the last $m$. For example, with $m = 2$, we have $b(1) = 0001$ and $b_0(1) = 00, b_1(1) = 01$, or $b(10) = 1010$ and $b_0(10) = b_1(10) = 10$.

\begin{proof}[Proof of \autoref{thm:vdc-nice}]
    We first describe concretely the Chamanara surface $S$ that we shall embed the binary van der Corput sequence graph into. The surface $S$ will be obtained from the unit square scaled by $\sqrt{N} = 2^m$. We also translate the square in both coordinates by $\delta = -1/2 - \varepsilon$, where $\varepsilon$ is a positive number smaller than $1/4\cdot2^{m}$. This small shift by $\delta$ (and choice of small enough $\varepsilon$) ensures that none of the integer grid lines pass through a singularity of the Chamanara surface. Therefore the surface $S$ that we work with is given by the square with corners $(\delta, \delta)$, $(\delta, 2^{m} + \delta)$, $(2^{m} + \delta, \delta)$ and $(2^{m} + \delta, 2^{m} + \delta)$, and segment identifications defined by the halving procedure described in \autoref{sec:vdc-intro}.

    Suppose $G_N$ is the $4^m$-th van der Corput sequence graph. To prove the theorem, we describe an embedding $\psi: G_N \to S$ by where it sends the vertices and the edges. There are $2^m \times 2^m$ lattice points inside this square with integer coordinates, which we treat as length $m$ binary strings. We can embed the $N = 4^m$ vertices of $G_N$ into these lattice points, so that $\psi(i) = (b_1(i), r\circ b_0(i))$. 
    \begin{figure}[h!]
        \centering
        \includegraphics[width=0.7\textwidth]{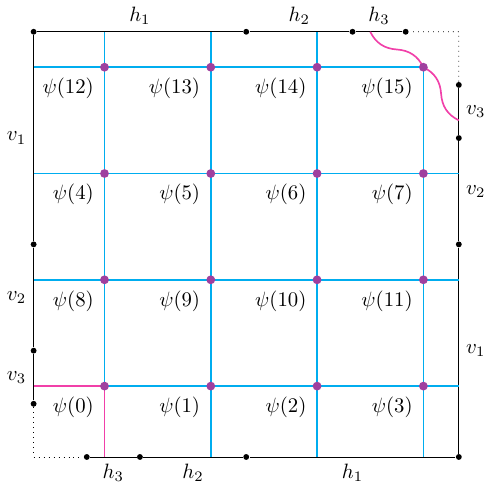}
        \caption{Embedding $\psi$ of the 16th van der Corput sequence graph into the Chamanara Surface. Cyan edges follow integer grid lines (cases 1-4), while magenta edges are rerouted (case 5).}
        \label{fig:vdc-embedding}
    \end{figure}
    
    Since the vertices are embedded at lattice points, the natural place to embed edges would be along integer grid lines, which only cross at lattice points. If we could do this for all the edges, we would have an embedding without crossings. In fact, our embedding $\psi$ will embed all but two edges along grid lines. The two remaining edges both connect $0$ to $4^m-1$ and are embedded with a slight reroute that does not introduce any crossings. We describe our edge embeddings in cases.

    \textbf{Case 1 (Vertical inner edges):} Suppose our edge $e$ is in $C_\pi$, which means $e = (i, S(i))$, and that $b_0(i)$ is not all-ones. Then $b(i)$ has a zero in the first $m$ bits, so adding from the left as described in \autoref{lem:vdc-succ} leaves the last $m$ bits unchanged. Therefore we have $b_1(i) = b_1(S(i))$ and $r \circ b_0(S(i)) = r \circ b_0(i) + 1$. Thus if $\psi(i) = (x, y)$, we have $\psi(S(i)) = (x, y+1)$. This produces an embedding $\psi(e)$ along a vertical grid line between two lattice points.

    \textbf{Case 2 (Horizontal inner edges):} Suppose the edge $e$ is in $C_1$, which means $e = (i, i+1)$, and that $b_1(i)$ is not all-ones. Then $b(i)$ has a zero in the last $m$ bits, so adding one leaves the first $m$ bits unchanged. Therefore we have $r \circ b_0(i+1) = r \circ b_0(i)$ and $b_1(i+1) = b_1(i) + 1$. Thus if $\psi(i) = (x, y)$, we have $\psi(i+1) = (x+1, y)$, producing an embedding $\psi(e)$ along a horizontal grid line between two lattice points.
        
    \textbf{Case 3 (Vertical outer edges):} Suppose $e = (i, S(i))$ is an edge in $C_\pi$, where $b_0(i)$ is all-ones but $b_1(i)$ is not. This corresponds to the top row of lattice points in the square, except for the top right most point. 
    Let $k$ be the first zero from the left in $b_1(i)$. The top side of $S$ has segment identifications by halving on the right, so our point $\psi(i)$ lies directly below the segment $h_k$ on top. 
    The start of segment $h_k$ in the top boundary is at $x = \delta + \sum_{j=1}^{k-1} 2^{m-j}$, while it starts in the bottom boundary at $\delta + 2^{m-k}$. Therefore, the vertical grid line $x = b_1(i)$ going up from $\psi(i)$ goes through the segment $h_k$ and turns into the grid line
    \[x = b_1(i) + \delta + 2^{m-k} - \delta - \sum_{j=1}^{k-1} 2^{m-j}.\]
    This corresponds to the binary string where the $k-1$ ones at the start of $b_1(i)$ are turned to zeros, while the zero at the $k$-th position turns to one, and all bits after are left unchanged. Therefore the next lattice point hit by this grid line is $(b_1(i) + 2^{m-k} - \sum_{j=1}^{k-1} 2^{m-j}, 0\dots 0) = \psi(S(i)).$ This produces an embedding $\psi(e)$ along a vertical grid line jumping across a segment identification of the surface.   
    
    \textbf{Case 4 (Horizontal outer edges):} 
    Suppose $e = (i, i+1)$ is an edge in $C_1$, where $b_1(i)$ is all-ones but $b_0(i)$ is not. This corresponds to the rightmost row of lattice points in the square, except for the top right most point. 
    Let $k$ be the first zero from the right in $b_0(i)$, which makes it the first zero from the left in $r \circ b_0(i)$. The right side of $S$ has segment identifications by halving on the top, so our point $\psi(i)$ lies directly left of the segment $v_k$ to the right. By a similar calculation as above, the horizontal grid line $x = r\circ b_0(i)$ going right from $\psi(i)$ goes through the segment $v_k$ and turns into the grid line
    \[y = r \circ b_0(i) + \delta + 2^{m-k} - \delta - \sum_{j=1}^{k-1} 2^{m-j}.\]
    This corresponds to the reverse of the binary string with the $k-1$ zeroes to the right, a one at the $k$-th position, and the same remaining bits to the left as $b_0(i)$. This is precisely $r(b_0(i) + 1)$, the $y$-coordinate of $\psi(i+1)$. Thus we have produced an embedding $\psi(e)$ along a horizontal grid line jumping across a segment identification of the surface. 
    
    \textbf{Case 5 (Edges requiring reroute):} The remaining edges are the two edges, one in $C_1$ and the other in $C_\pi$, starting from the point where both $b_0(i)$ and $b_1(i)$ are all-ones. Therefore $i = N-1$, for which both taking the successor and adding 1 modulo $N$ lead to $0$. Thus both our edges are of the form $(N-1, 0)$. We embed these edges into two curves connecting $\psi(0) = (0, 0)$ and $\psi(N-1) = (2^m - 1, 2^m - 1)$, passing through the segments $h_{m+1}$ and $v_{m+1}$ respectively.  
    
    The vertical and horizontal edges going down and left respectively from $\psi(0) = (0, 0)$ pass through the sides at a distance $-\delta = 1/2 + \varepsilon$ from the bottom left corner of the square. This means they pass through the segments $h_{m+1}$ and $v_{m+1}$ respectively. On the other hand, $\psi(N-1) = (2^m - 1, 2^m - 1)$ is a distance of $1/2 - \varepsilon$ away in both dimensions from the top right corner of the square. Since $\varepsilon$ is chosen to be small enough, the vertical and horizontal grid lines from $\psi(N-1)$ pass through the segments $h_{m+2}$ and $v_{m+2}$ respectively. We reroute these grid lines slightly to instead meet the segments $h_{m+1}$ and $v_{m+1}$ at the points where the grid lines from $(0, 0)$ emerge, as illustrated by the magenta edges in \autoref{fig:vdc-embedding}. These rerouted grid lines don't have any crossings, since there are no other edges near these corners. Therefore these curves are the desired embeddings of the edges $(N-1, 0)$.  
\end{proof}

\subsection{Smaller sequence graphs embed as minors} \label{sec:top-subgraphs}

For a fixed sequence $a_0, \dots$, let $G_N$ be the $N$-th sequence graph.
One might expect that as $N$ varies, the changes to the ordering $\pi$ lead to significant changes in the structure of the $C_\pi$ edges in $G_N$.
In fact, these changes are quite predictable, and are explained by the notion of minors from graph theory. 

A graph $H$ is said to be a \textbf{minor} of a graph $G$ if $H$ can be obtained from $G$ via a sequence of vertex deletions, edge deletions and edge contractions. It is well known that if a graph $G$ can be embedded into some surface $S$, then so can any minor $H$ of $G$ (none of the three operations will break the embedding).

\begin{lemma}\label{thm:top-subgraphs}
    For $N<M$, let $G_N, G_M$ be the $N$-th and $M$-th sequence graphs respectively and suppose $G_N'$ is the graph obtained by deleting the $C_1$ edge $(N-1, 0)$ in $G_N$. Then $G'_N$ is a minor of $G_M$.
\end{lemma}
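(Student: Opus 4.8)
The plan is to carry out, on the two Hamiltonian cycles of $G_M$ separately, the operations that collapse it down to $G_N'$: edge deletions to trim $C_1$ and edge contractions to trim $C_\pi$. Write $V_{\mathrm{old}} = \{0, \dots, N-1\}$ and $V_{\mathrm{new}} = \{N, \dots, M-1\}$ for the vertices of $G_M$, and write $C_1^M, C_\pi^M$ (resp. $C_1^N, C_\pi^N$) for the two cycles of $G_M$ (resp. $G_N$).

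First I would isolate the single structural fact that drives everything. Because the $a_i$ are distinct, enlarging the sequence from $N$ to $M$ terms never changes the relative order of $a_0, \dots, a_{N-1}$. Hence the sorted list for $G_M$, restricted to $V_{\mathrm{old}}$, is exactly the sorted list for $G_N$, and the cyclic order in which the old vertices appear along $C_\pi^M$ agrees with their cyclic order along $C_\pi^N$. Equivalently, the old vertices cut the cycle $C_\pi^M$ into arcs, and between two cyclically consecutive old vertices $u,w$ there sits an arc $u - v_1 - \cdots - v_t - w$ with $t \ge 0$ new vertices, where $(u,w)$ is precisely an edge of $C_\pi^N$ (the wrap-around arc, running from the largest old vertex through the new terms above and below all old ones to the smallest old vertex, yields the wrap-around edge of $C_\pi^N$).

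With this in hand the two reductions are as follows. In step (1) I delete every $C_1^M$ edge incident to a vertex of $V_{\mathrm{new}}$, namely $(N-1,N), (N,N+1), \dots, (M-2,M-1)$ and $(M-1,0)$; what survives of $C_1^M$ is exactly the path $0 - 1 - \cdots - (N-1)$, which is $C_1^N$ with the edge $(N-1,0)$ removed, i.e. the $C_1$ part of $G_N'$. In step (2) I rebuild $C_\pi^N$: after step (1) each new vertex has degree two and lies on a single arc $u - v_1 - \cdots - v_t - w$ of the still-intact cycle $C_\pi^M$, so contracting the edges $(u,v_1), (v_1,v_2), \dots, (v_{t-1},v_t)$ absorbs $v_1, \dots, v_t$ into $u$ and turns the final edge $(v_t, w)$ into the single edge $(u,w) \in C_\pi^N$. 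Performing this on every arc restores the full cycle $C_\pi^N$ on $V_{\mathrm{old}}$, and a degree count confirms the result is exactly $G_N'$ (vertices $0$ and $N-1$ end with degree three, all other old vertices with degree four).

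The one point demanding care — and the closest thing to an obstacle — is the interaction between the two steps. A contraction ordinarily transfers all edges at the contracted vertex, so if the $C_1$ edges at the new vertices were still present during step (2) they would be dumped onto old vertices and corrupt the horizontal structure. Doing the deletions of step (1) \emph{first} is precisely what averts this: it strips each new vertex down to its two $C_\pi$ edges before any contraction, so the contractions act only on degree-two vertices and reintroduce no $C_1$ edges. A secondary check is that contracting an arc never merges the two distinct old endpoints $u \neq w$ (I contract all arc edges \emph{except} the last), and that any parallel edge created when $u$ and $w$ happen to be adjacent in both cycles is already present in $G_N'$ and so is harmless for the minor relation.
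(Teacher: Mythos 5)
Your proof is correct and takes essentially the same route as the paper's: first delete the $C_1$ edges meeting the new vertices, then use the fact that appending terms to the sequence only subdivides the sorted cycle (the old vertices keep their cyclic order along $C_\pi$) to contract the arcs of new, degree-two vertices back down to the edges of $C_\pi^N$. If anything you are slightly more careful than the paper, which omits $(N-1,N)$ from its list of deleted edges and does not explicitly address the wrap-around arc or possible parallel edges.
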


\begin{proof}
    We describe here the sequence of edge deletions and edge contractions that transforms $G_M$ into $G_N'$. First, consider the cycle $C_{1, M}$ in $G_M$. We can remove the edges $(N,N+1),\ldots,(M-2,M-1)$ and $(M-1,0)$. The leftover piece of the cycle $C_{1, M}$ contains the same edges as $C_{1, N}$ in $G_N$, except for the edge $(N-1, 0)$ which was removed to form $G'_N$. It remains to modify the other cycle.

     The vertices from $N$ to $M$ are now of degree 2, with those edges coming exclusively from $C_{\pi,M}$. Remember that $C_{\pi,M}$ was obtained from the ordering of the $a_0,\ldots,a_{M-1}$. Starting from $C_{\pi,N}$ and the first $N$ elements of the sequence $a_0,\ldots,a_{N-1}$, adding the next element $a_N$ can be done by subdividing one of the edges of the cycle and including $a_N$ ``in its position''. Repeating this process $M-N$ times gives $C_{\pi,M}$. Naturally, we can reverse this process via edge contractions: contract one of the two remaining edges adjacent to $M-1$, then to $M-2$, and so on until $N$. Since all these vertices are of degree 2, which edge is contracted has no impact on the final cycle. We are left with an $N$ vertex graph, and the two ``cycles'' are identical: $G_N'$ is a minor of $G_M$.
\end{proof}

More precisely, our proof actually shows that $G'_N$ is a \emph{topological minor} of $G_M$, since the edge contractions are all actually subdivisions. We have now shown that if the $M$-th sequence graph embeds into some surface, then for all $N<M$ the $N$-th sequence graph embeds into the same surface up to one edge. \autoref{thm:kron-hard} and \autoref{thm:vdc-hard} immediately follow.

\subsection*{Acknowledgements} 
The authors thank Jayadev Athreya and Stefan Steinerberger for many helpful conversations.

\bigskip
\bigskip
\printbibliography

@misc{steinerberger2020expandertest,
      title={Using Expander Graphs to test whether samples are i.i.d}, 
      author={Stefan Steinerberger},
      year={2020},
      eprint={2008.01153},
      archivePrefix={arXiv},
      primaryClass={math.ST},
      url={https://arxiv.org/abs/2008.01153}, 
}

@article{korssjoen2022structures,
   title={Finding structure in sequences of real numbers via graph theory: a problem list},
   volume={15},
   ISSN={1944-4176},
   url={http://dx.doi.org/10.2140/involve.2022.15.251},
   DOI={10.2140/involve.2022.15.251},
   number={2},
   journal={Involve, a Journal of Mathematics},
   publisher={Mathematical Sciences Publishers},
   author={Korssjoen, Dana G. and Li, Biyao and Steinerberger, Stefan and Tripathi, Raghavendra and Zhang, Ruimin},
   year={2022},
   month=jul, pages={251–270},
    label={KLSTZ},
}

@article{van_Ravenstein_1988, 
    title={The Three Gap Theorem (Steinhaus Conjecture)}, 
    volume={45}, 
    DOI={10.1017/S1446788700031062}, 
    number={3}, 
    journal={Journal of the Australian Mathematical Society. Series A. Pure Mathematics and Statistics}, 
    author={van Ravenstein, Tony}, 
    year={1988}, 
    pages={360–370} }

@article{COSTA2010369,
    title = {Circulant graphs and tessellations on flat tori},
    journal = {Linear Algebra and its Applications},
    volume = {432},
    number = {1},
    pages = {369-382},
    year = {2010},
    issn = {0024-3795},
    doi = {https://doi.org/10.1016/j.laa.2009.08.040},
    url = {https://www.sciencedirect.com/science/article/pii/S0024379509004418},
    author = {Sueli I. Rodrigues Costa and Jo\~{a}o E. Strapasson and Marcelo Muniz Alves and Tati B. Carlos},
    label={Cos+},
}

@incollection {chamanara2004infinite,
    AUTHOR = {Chamanara, Reza},
     TITLE = {Affine automorphism groups of surfaces of infinite type},
 BOOKTITLE = {In the tradition of {A}hlfors and {B}ers, {III}},
    SERIES = {Contemp. Math.},
    VOLUME = {355},
     PAGES = {123--145},
 PUBLISHER = {Amer. Math. Soc., Providence, RI},
      YEAR = {2004},
      ISBN = {0-8218-3607-2},
   MRCLASS = {30F35 (30F60 32G15)},
  MRNUMBER = {2145060},
MRREVIEWER = {Grzegorz\ Gromadzki},
       DOI = {10.1090/conm/355/06449},
       URL = {https://doi.org/10.1090/conm/355/06449},
}

@misc{herrlich2016veechchamanara,
      title={Notes on the Veech group of the Chamanara surface}, 
      author={Frank Herrlich and Anja Randecker},
      year={2016},
      eprint={1612.06877},
      archivePrefix={arXiv},
      primaryClass={math.GT},
      url={https://arxiv.org/abs/1612.06877}, 
}

@misc{artigiani2025veechcovers,
      title={Veech groups of covers of the Chamanara surface}, 
      author={Mauro Artigiani and Anja Randecker and Chandrika Sadanand and Ferrán Valdez and Gabriela Weitze-Schmithüsen},
      year={2025},
      eprint={2504.00956},
      archivePrefix={arXiv},
      primaryClass={math.GT},
      url={https://arxiv.org/abs/2504.00956}, 
      label={ARSVW},
}

@article{vdc,
  author    = {van der Corput, Johannes Gaultherus},
  title     = {Verteilungsfunktionen {II}},
  journal   = {Akad. Wetensch. Amsterdam Proc.},
  volume    = {38},
  pages     = {1058--1066},
  year      = {1935}
}

@article{Sos,
    title = {Gaultherus {$n\alpha$}},
    year = {1957},
    journal={Ann. Univ. Sci. Budapest, E{ö}tv{ö}s Sect. Math.},
    volume={1},
    pages={127-134},
    author = {S{\'{o}}s, Vera Tur\'{a}n}
}

@misc{delecroix2024infinite,
      title={Infinite Translation Surfaces in the Wild}, 
      author={Vincent Delecroix and Pascal Hubert and Ferrán Valdez},
      year={2024},
      eprint={2403.05424},
      archivePrefix={arXiv},
      primaryClass={math.GT},
      url={https://arxiv.org/abs/2403.05424}, 
}

@article{randecker2018wild,
  title={Wild translation surfaces and infinite genus},
  author={Randecker, Anja},
  journal={Algebraic \& Geometric Topology},
  volume={18},
  number={5},
  pages={2661--2699},
  year={2018},
  publisher={Mathematical Sciences Publishers}
}

@article{lindsey2016flatsurfmodels,
title = {Infinite type flat surface models of ergodic systems},
journal = {Discrete and Continuous Dynamical Systems},
volume = {36},
number = {10},
pages = {5509-5553},
year = {2016},
issn = {1078-0947},
doi = {10.3934/dcds.2016043},
url = {https://www.aimsciences.org/article/id/f03f73e8-9b08-4f26-8365-5fb9255fe63f},
author = {Kathryn  Lindsey and Rodrigo Trevi\~no},
}

@article{viana2006ergodic,
  title={Ergodic theory of interval exchange maps.},
  author={Viana, Marcelo},
  journal={Revista Matem{\'a}tica Complutense},
  volume={19},
  number={1},
  pages={7--100},
  year={2006}
}

@article{yoccoz2010interval,
  title={Interval exchange maps and translation surfaces},
  author={Yoccoz, Jean-Christophe},
  journal={Homogeneous flows, moduli spaces and arithmetic},
  volume={10},
  number={1-69},
  pages={3--1},
  year={2010},
  publisher={Amer. Math. Soc., Providence, RI}
}

@article {bruin2023odometers,
    AUTHOR = {Bruin, Henk and Lukina, Olga},
     TITLE = {Rotated odometers},
   JOURNAL = {J. Lond. Math. Soc. (2)},
  FJOURNAL = {Journal of the London Mathematical Society. Second Series},
    VOLUME = {107},
      YEAR = {2023},
    NUMBER = {6},
     PAGES = {1983--2024},
      ISSN = {0024-6107,1469-7750},
   MRCLASS = {37E05 (28D05 37B10 37E20)},
  MRNUMBER = {4598177},
MRREVIEWER = {Anthony\ Quas},
       DOI = {10.1112/jlms.12731},
       URL = {https://doi.org/10.1112/jlms.12731},
}

@article{downaroicz2005survey,
author = {Downarowicz, Tomasz},
year = {2005},
month = {01},
pages = {},
title = {Survey of odometers and Toeplitz flows},
isbn = {9780821837511},
journal = {Kolyada, Sergiy; Manin, Yuri; Ward, Thomas: Algebraic and topological dynamics, AMS, 7-37 (2005)},
doi = {10.1090/conm/385/07188}
}

@article{iommi2024odometers,
author = {Iommi, Godofredo and Ponce, Mario},
year = {2024},
month = {12},
title = {Odometers in non-compact spaces},
journal = {Annali Scuola Normale Superiore - Classe di Scienze},
doi = {10.2422/2036-2145.202407_001}
}

@misc{taha2018gaptheorem,
      title={The Three Gap Theorem, Interval Exchange Transformations, and Zippered Rectangles}, 
      author={Diaaeldin Taha},
      year={2018},
      eprint={1708.04380},
      archivePrefix={arXiv},
      primaryClass={math.DS},
      url={https://arxiv.org/abs/1708.04380}, 
}

\end{document}